\def\Dj{\hbox{D\kern-.73em\raise.30ex\hbox{-} \raise-.30ex\hbox{}}}
\def\dj{\hbox{d\kern-.33em\raise.80ex\hbox{-} \raise-.80ex\hbox{\kern-.40em}}}
\def\<{\langle}                     
\def\>{\rangle}                     
\newtheorem{thm}{Theorem}[section]
\newtheorem{cor}[thm]{Corollary}
\newcommand{\ben}{\begin{enumerate}}
	\newcommand{\een}{\end{enumerate}}
\theoremstyle{plain}
\newtheorem{theorem}{Theorem}[section]
\newtheorem{lemma}{Lemma}[section]
\newtheorem{remark}{Remark}[section]
\theoremstyle{definition}
\newtheorem{definition}{Definition}[section]
\numberwithin{equation}{section}
\begin{document}
	\title[Hamiltonian Complete Number of Graphs]{Hamiltonian Complete Number of Some Variants of Caterpillar Graphs} 
	
	
	\author[T.C Adefokun]{Tayo Charles Adefokun$^1$ }
	\address{$^1$Department of Computer and Mathematical Sciences,
		\newline \indent Crawford University,
		\newline \indent Nigeria}
	\email{tayoadefokun@crawforduniversity.edu.ng}
	
	\author[O. L. Ogundipe]{Opeoluwa Lawrence Ogundipe$^2$}
	\address{$^2$Department of Mathematics,
		\newline \indent University of Ibadan, Ibadan
		\newline \indent Nigeria}
	\email{opeogundipe2002@yahoo.com}
	
	\author[K.N. Onaiwu]{Kingsley Nosa Onaiwu$^3$}
	\address{$^3$Department of Physical and Earth Sciences,
		\newline \indent Crawford University,
		\newline \indent Nigeria}
		\email{littlebeging@yahoo.com}
	
\author[D. O. Ajayi]{Deborah Olayide Ajayi$^4$}
	\address{$^4$Department of Mathematics,
		\newline \indent University of Ibadan, Ibadan
		\newline \indent Nigeria}
	\email{adelaideajayi@yahoo.com}
	
	\keywords{Hamiltonian Graphs, Non-Hamiltonian Graphs, Caterpiller graphs. Spanning Paths Spanning cycles\\
		\indent 2010 {\it Mathematics Subject Classification}. Primary: 05C45, 05C40 05C38}
	
	\begin{abstract} 
		A graph \(G\) is said to be Hamiltonian if it contains a spanning cycle. In this work, we investigate the Hamiltonian completeness of certain classes of caterpillar graphs, which are trees with a central path to which all other vertices are adjacent. For a non-Hamiltonian graph \(G\), the Hamiltonian complete number \(\lambda_H(G)\) is the minimum number of edges that must be added to \(G\) to make it Hamiltonian. We focus on both regular and irregular caterpillar graphs, deriving explicit formulas for \(\lambda_H(G)\) in various cases. Specifically, we show that for a regular caterpillar graph \(G_{n(k)}\) where each vertex on the central path is adjacent to \(k\) leaves, \(\lambda_H(G_{n(k)}) = n(k-1)\). We also explore irregular caterpillar graphs, where the number of leaves adjacent to each vertex on the central path varies, and provide bounds for \(\lambda_H(G)\) in these cases. Our results contribute to the understanding of Hamiltonian properties in tree-like structures and have potential applications in network design and optimization.
	\end{abstract} 
	
	\maketitle		
	
	\section{Introduction}
	Let $G$ be a graph with vertex set $V(G)$ and edge set $E(G)$. A Hamiltonian cycle on a $G$ is a cycle that visits every vertex on $V(G)$ exactly once. Therefore, we say that $G$ is Hamiltonian if it contains a spanning cycle. Hamiltonian problem is the problem of determining if a graph contains a spanning cycle or not. This problem is $NP-$complete,  'notoriously hard' \cite{SB}, and the complete characterization of Hamiltonian graph has not been obtained. Some even believe that such characterization in non-existent. Furthermore, the only available technique of determining if a graph contains a spanning cycle is by exhaustive searching.
	
	Graphs such as complete graphs, cycle graphs, tournament on odd number vertices\cite{R1} are all Hamiltonian graphs. All Hamiltonian graphs are bi-connected, which means that at least two edges will be removed from the graph in other to dissolve it into at least two components. However, not every bi-connected graph is Hamiltonian. Attempts have been made on characterization of Hamiltonian graphs and the best known results are vertex degree characterization. Bondy-Chvatal in \cite{BC1} observed that a graph is Hamiltonian if and only if its closure is Hamiltonian. Therefore, any graph whose closure is complete is Hamiltonian. (The closure of  graph $G$ on $n$ vertices is obtained when every pair of non-adjacent vertices are connected by an edge not in $E(G)$. According to Dirac \cite{D1}, a simple graph $G$ on $n-$vertices is Hamiltonian if $d(v) \geq \frac{n}{2}$ for all $v \in V(G) $, and $n \geq 3$, while in \cite{O1} Ore showed that for any simple graph $G$ of order $n \geq 3$, $G$ is Hamiltonian if for all pair of non adjacent vertices, the sum of their degree is at least $n$. It was also observed on \cite{H1} that all $4-$connected planar triangulation contains a spanning cycle, while Tutte improved on this in \cite{T1} by showing that every $4-$connected planar graph is in fact Hamiltonian.   
	
	Graphs that do not contain a spanning cycle is known as non-Hamiltonian graph. Such, trivially, is any $1-$connected graph or any graph that contains a leaf or a pendant. Paths, trees etc are non-Hamiltonian graphs. In \cite{AA1} a result was obtained that showed some conditions for which a graph will not be Hamiltonian. Also, \cite{EN11} investigates a non-Hamiltonian $3-$connected cubic bipartite graph, where  non-Hamiltonian cyclical 4-edge connected bi-cubic graph was constructed on $54$ vertices. 
	
	In this work, we continue to explore the idea on non-Hamiltonian graphs as introduced in \cite{A1}, which was motivated by the importance of Hamiltonian graphs in computer graphics, mapping, electronic circuitry, route planning and etc. It was asked in \cite{A1} that how many edges can be added to the $E(G)$ of graph $G$ in order to make $G$ Hamiltonian? Trivially, for a Hamiltonian graph, the number is zero. The optimal number of such edges is called the Hamiltonian complete number $\lambda_H(G)$ of graph $G$. Hamiltonian complete number was investigated for some basic graphs in \cite{A1}. 
	
	In this work, we look further at some naturally non-Hamiltonian graphs with the aim of deriving their $\lambda_H(G)-$numbers. We focus on various caterpillar tree graphs.

	\section{Preliminaries}
	In this section we present some of the initial results and definitions that will be used in work. Other definition may be presented in the main body of the work as they are needed.
	
	A path $P_n$ and a cycle $C_n$ both contain $n$ vertices,  $n-1$ and $n$ edges respectively. The set $[n]$ denotes $\left\lbrace  1,2,...,n\right\rbrace $ while $[a,b]= \left\lbrace  a,a+1,...,b\right\rbrace$. We describe two vertices $a_1,a_2 \in V(G)$ as adjacent if $a_1a_2 \in E(G)$. The degree $d(v_i)$ of a vertex $v_i$ is the cardinal number of the set of vertices adjacent in $V(G)$ that are adjacent to  $v_i$. A vertex $v_i \in V(G)$ is called a leaf if $d(v_i)=1$ and the edge that connects leaf is known as a pendant. For some vertex $v_i \in V(G)$, $l(v_i)$ is the number of leaves adjacent to $v_i$. In other words, the number of pendants adjacent to $v_i$. 
	
	Also, the distance between two vertices, $a_1,a_2 \in V(G)$ is $d(a_1,a_2)=k$, which is the minimum number of edges between $a_1$ and $a_2$. 
	
	A star graph $S_n$ is a graph such that the central vertex $v_0$ is adjacent to $n$ leaves. It should be noted therefore that $|V(S_n)|=n+1$ and $|E(S_n)|=n$. We shall refer to Hamiltonian path complete number of $G$ as the number of edges need to be added to $E(G)$ for $G$ to contain a spanning path. We define Hamiltonian complete set $E^H(G)$ of graph $G$ as the optimal set of all extra edges $G$ required to be Hamiltonian. Clearly, the cardinal number $E^H(G)$ is the Hamiltonian complete number of $G$.
	
	The following results are established in \cite {A1}.
	
	\begin{theorem} \label{thm1}
		For a graph $G$ that $G$ contains $n$ leaves, $\lambda_H(G) \geq \lceil \frac{n}{2} \rceil$.
		
	\end{theorem}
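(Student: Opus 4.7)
The plan is to use a simple double counting argument tied to the degree constraint of Hamiltonian cycles. Suppose $G$ is non-Hamiltonian and let $L \subseteq V(G)$ be the set of leaves, so $|L|=n$. Let $E^H(G)$ be any Hamiltonian complete set for $G$, and let $H = G \cup E^H(G)$ denote the resulting Hamiltonian graph. By definition $H$ contains a spanning cycle $C$.

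First I would observe that in the spanning cycle $C$, every vertex has degree exactly $2$. Since each leaf $\ell \in L$ satisfies $d_G(\ell) = 1$, the unique edge of $G$ incident to $\ell$ can contribute only one to $\ell$'s degree in $C$; consequently at least one edge incident to $\ell$ in $C$ must come from $E^H(G)$. Thus every leaf is an endpoint of at least one edge of $E^H(G)$.

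Next I would double count the set of pairs $\{(\ell, e) : \ell \in L,\ e \in E^H(G),\ \ell \in e\}$. From the previous step the count is at least $|L| = n$. On the other hand, each edge $e \in E^H(G)$ has exactly two endpoints, so it contributes at most $2$ to this count. Therefore
\[
n \;\leq\; \sum_{e \in E^H(G)} |e \cap L| \;\leq\; 2\,|E^H(G)|,
\]
which yields $|E^H(G)| \geq n/2$. Since $|E^H(G)|$ is an integer, $|E^H(G)| \geq \lceil n/2 \rceil$, and taking the infimum over all Hamiltonian complete sets gives $\lambda_H(G) \geq \lceil n/2 \rceil$.

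There is essentially no obstacle here; the argument is purely a degree-counting observation. The only subtlety worth mentioning explicitly is that an added edge may connect two leaves (which is why the bound $\lceil n/2 \rceil$ is the right one and not $n$), and that the ceiling is forced when $n$ is odd because one of the added edges incident to a leaf must then have its other endpoint among non-leaves.
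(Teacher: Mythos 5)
Your argument is correct and complete: the observation that a leaf contributes only one $G$-edge toward its required degree of $2$ on the spanning cycle, followed by double counting leaf--edge incidences over $E^H(G)$ and invoking integrality, gives exactly $\lambda_H(G)\geq\lceil\frac{n}{2}\rceil$. Note that this paper does not actually prove Theorem~\ref{thm1}; it imports it from \cite{A1} without proof, so there is no in-paper argument to compare against --- but your degree-counting proof is the natural one and I see no gap in it.
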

	
	\begin{theorem} \label{thm2}
		Let $S_n$ be a star graph. Then $\lambda_H(S_n) = n-1$.
	\end{theorem}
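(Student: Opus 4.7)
The plan is to establish $\lambda_H(S_n)=n-1$ by proving matching upper and lower bounds.

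For the upper bound, I would exhibit an explicit Hamiltonian completion. Label the center as $v_0$ and the leaves as $v_1,v_2,\ldots,v_n$. Define the edge set
\begin{equation*}
F = \{v_1v_2,\, v_2v_3,\, \ldots,\, v_{n-1}v_n\},
\end{equation*}
so that $|F|=n-1$ and $F\cap E(S_n)=\emptyset$ (since all original edges are incident to $v_0$). In the graph $S_n\cup F$, the sequence $v_0,v_1,v_2,\ldots,v_n,v_0$ is a cycle that visits every vertex exactly once, using the existing pendants $v_0v_1$ and $v_0v_n$ together with the $n-1$ new edges of $F$. Hence $\lambda_H(S_n)\le n-1$.

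For the lower bound, I would argue combinatorially about any Hamiltonian completion. Suppose $F'$ is a set of new edges such that $S_n\cup F'$ contains a spanning cycle $C$. Since $|V(S_n)|=n+1$, the cycle $C$ has exactly $n+1$ edges. The center $v_0$ appears on $C$ exactly once, so precisely two edges of $C$ are incident to $v_0$. Any added edge of $F'$ that was incident to $v_0$ would already belong to $E(S_n)$, a contradiction; hence the two $v_0$-edges of $C$ are original star edges, and the remaining $n-1$ edges of $C$ avoid $v_0$ entirely. Those $n-1$ edges therefore connect pairs of leaves, but $S_n$ contains no leaf-to-leaf edge, so all of them must lie in $F'$. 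This gives $|F'|\ge n-1$, and taking the minimum over $F'$ yields $\lambda_H(S_n)\ge n-1$.

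Combining the two bounds completes the proof. I do not anticipate any real obstacle: Theorem~\ref{thm1} gives only $\lceil n/2\rceil$, which is too weak, so the lower bound must be obtained directly from the structural observation that the two edges of the Hamiltonian cycle meeting $v_0$ are forced to be original, forcing the other $n-1$ cycle edges to be new leaf-leaf edges. The only point deserving care in the write-up is the explicit verification that the edges added for the upper bound are indeed not already present, which is immediate from the definition of $S_n$.
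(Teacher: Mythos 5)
Your proof is correct. Note that the paper does not actually prove this statement: Theorem~\ref{thm2} is imported from reference \cite{A1} as a preliminary result, so there is no in-paper argument to compare against. Your two bounds are both sound: the explicit completion $F=\{v_1v_2,\ldots,v_{n-1}v_n\}$ together with the two original pendants $v_0v_1$ and $v_nv_0$ gives a spanning cycle, and the counting argument for the lower bound is exactly the right structural observation --- every edge incident to the centre already lies in $E(S_n)$, so a spanning cycle on $n+1$ vertices uses precisely two original edges at $v_0$ and its remaining $n-1$ edges are leaf--leaf edges that must all be added. The only caveat worth recording is the implicit hypothesis $n\ge 2$ (for $n=1$ the graph has two vertices and no spanning cycle exists at all), which the statement of the theorem leaves unstated.
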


	\section{Results}
	Now we present the results in this work. We start with the definition of a caterpillar tree graph.
	A caterpillar tree graph, $G$, is a connected graph that contains a central path $P_n$, to which all other vertices of $G$ are at distance at most $1$. We categorize caterpillar graphs that are being considered in this work into regular and irregular caterpillars. For regular caterpillar graphs, for every $v_i \in V(P_n)$, $l(v_i) = k$, where $k \geq 1$ and $k$ is an integer, where $P_n$ is the central path. In irregular caterpillar, for $v \in V(P_n)$, $l(v_i) \in [0,p]$, where $p$ is a positive integer.  We shall denote as $G_{n(k)}$, any caterpillar graph with central path $P_n$ whose each vertex on $P_n$ is adjacent to $k$ leaves. 
	
	In the first result, we consider a regular $G_{n(1)}$ with $l(v)=1$, for all $v \in V(P_n)$.
	
	\begin{theorem}
		Let $G_{n(1)}$ be a regular caterpillar graph such that for every $v_i \in P_n$, $l(v_i)=1.$ Then $\lambda_H(G_{n(1)}) = \lceil \frac{n}{2} \rceil$.
	\end{theorem}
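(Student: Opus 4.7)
The plan has two halves: the lower bound is immediate from Theorem~\ref{thm1}, so the real work is exhibiting a set of $\lceil n/2 \rceil$ extra edges that produces a spanning cycle.

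\textbf{Lower bound.} Denote the spine by $P_n = v_1 v_2 \cdots v_n$ and the leaves by $u_1,\dots,u_n$ with $u_i v_i \in E(G_n)$. Since $G_n$ has exactly $n$ leaves, Theorem~\ref{thm1} immediately gives $\lambda_H(G_n) \geq \lceil n/2 \rceil$.

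\textbf{Upper bound by explicit construction.} I would treat the two parities separately and in each case write down a candidate edge set $F$ with $|F| = \lceil n/2 \rceil$, then display a Hamiltonian cycle in $G_n \cup F$ using the template
\[
u_1,\; v_1,\; v_2,\; u_2,\; u_3,\; v_3,\; v_4,\; u_4,\; u_5,\; v_5,\; v_6,\; u_6,\; \ldots
\]
which alternates between a spine-spine edge and a leaf-leaf edge every four vertices. Concretely:
\begin{itemize}
\item For $n = 2m$ even, take $F = \{u_{2i}u_{2i+1} : 1 \leq i \leq m-1\} \cup \{u_1 u_n\}$, which has $m = n/2$ edges. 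The cycle $u_1 v_1 v_2 u_2 u_3 v_3 v_4 u_4 \cdots u_{n-1} v_{n-1} v_n u_n u_1$ uses each spine edge $v_{2i-1}v_{2i}$, each pendant $u_i v_i$, each new leaf-leaf edge $u_{2i}u_{2i+1}$, and the closing edge $u_n u_1$.
\item For $n = 2m+1$ odd, take $F = \{u_{2i}u_{2i+1} : 1 \leq i \leq m\} \cup \{u_1 v_n\}$, which has $m+1 = \lceil n/2 \rceil$ edges. The cycle $u_1 v_1 v_2 u_2 u_3 v_3 v_4 u_4 \cdots v_{n-1} u_{n-1} u_n v_n u_1$ closes through the single added spine-leaf edge $v_n u_1$.
\end{itemize}

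\textbf{Verification.} The remaining step is a bookkeeping check that each listed cycle visits all $2n$ vertices exactly once and uses only edges of $G_n \cup F$. I would verify this by induction on $m$: the block $u_{2k-1}, v_{2k-1}, v_{2k}, u_{2k}$ uses the pendants $u_{2k-1}v_{2k-1},\ u_{2k}v_{2k}$ and the spine edge $v_{2k-1}v_{2k}$, while the transition from the $k$-th block to the $(k+1)$-th block uses the added edge $u_{2k}u_{2k+1}$; the closing edge then joins the final vertex back to $u_1$. Combined with the lower bound, this yields $\lambda_H(G_n) = \lceil n/2 \rceil$.

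\textbf{Expected obstacle.} There is no deep obstacle; the only subtlety is handling the two parities uniformly so that the closure of the cycle is consistent with the set $F$ being of size exactly $\lceil n/2 \rceil$. The odd case is the tighter one, because the natural leaf-pairing scheme leaves $u_1$ unmatched and one must close through a spine-leaf chord $u_1 v_n$ rather than a leaf-leaf edge; I would present the odd case first to make sure this extra edge is counted correctly.
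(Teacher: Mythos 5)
Your proposal is correct and follows essentially the same route as the paper: the lower bound from Theorem~\ref{thm1} applied to the $n$ leaves, and an upper bound by pairing consecutive leaves with $\lfloor n/2\rfloor$ or $n/2$ leaf--leaf chords and closing the cycle with one extra chord ($u_nu_1$ when $n$ is even, a leaf--spine chord when $n$ is odd). The paper's odd-case construction uses the mirror pairing $u_1u_2, u_3u_4,\dots$ with closing edge $u_nv_1$, but this is the same argument up to relabeling.
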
  
	
	\begin{proof}
		Suppose that $n$ is odd. Clearly, $G_{n(1)}$ contains $n-$leaves, From Theorem \ref{thm1}, $\lambda_H(G_{n(1)}) \geq \lceil \frac{n}{2} \rceil$ . Conversely, suppose that $V(P_n)=\left\lbrace v_1, v_2, \cdots , v_n \right\rbrace \subset V'(G_{n(1)})$. Then for each $v_i \in V(P_n)$, there exists $u_i \in V(G_{n(1)})$, $u_i \notin V(P_n)$, such that $v_iu_i$ is a pendant in $E(G_{n(1)})$ and $V(P_n) \cup V'(G_{n(1)})=V(G_n)$. Suppose further that there exist an edge set $E'(G)$, with $E'(G) \cap E(G_{n(1)}) = \emptyset$, such that $E'(G)$ contains edges that will make $ G_{n(1)} $ Hamiltonian. Suppose therefore, that the following edges $\left\lbrace u_1u_2, u_3u_4, \cdots u_{n-2}u_{n-1}, u_nv_1 \right\rbrace$ of $E'(G)$ are added to $ G_{n(1)} $. Then $|E'(G)|=\frac{n+1}{2}=\lceil \frac{n}{2} \rceil$, and now we have the cycle $v_1 \rightarrow u_1 \rightarrow u_2 \rightarrow v_2 \rightarrow v_3 \rightarrow u_3 \rightarrow \cdots \rightarrow v_{n-1} \rightarrow u_{n-1} \rightarrow v_n \rightarrow u_n \rightarrow v_n \rightarrow v_1$, which contains all members of $E'(G)$ and also is a spanning cycle of the resultant graph. Next we consider the situation where $n$ is even. Suppose that $P_n$ is the central path of $G_{n(1)}$ and $n$ is even. Following Theorem \ref{thm1}, $\lambda_H(G_n) \geq \lceil \frac{n}{2} \rceil$. Now, suppose we create an edge set $E^H(G_{n(1)})$ and populate $E^H(G_{n(1)})$ with Hamiltonian complete edges $u_2u_3, u_4u_5, u_6u_7, \cdots, u_{n-2}u_{n-1}, u_nu_1$ of $G_{n(1)}$. Clearly $|E^H(G_{n(1)})|= \frac{n}{2}$. Now, let $\bar{G}_{n(1)}$ be a graph, such that $E(\bar{G}_{n(1}))=E(G_{n(1)}) \cup E^H(G_n)$ and $V(G_{n(1)}) = V(\bar{G}_{n(1)})$. Then $\bar{G}_{n(1)}$ contains a spanning cycle, $C_n = u_1 \rightarrow v_1 \rightarrow v_2 \rightarrow u_2 \rightarrow \cdots v_{n-1} \rightarrow v_n \rightarrow u_n \rightarrow u_1$, which contains all members of $E^H(G_{n(1)})$. Thus, $\lambda_H(G_{n(1)}) \leq \lceil \frac{n}{2} \rceil$ for all positive integer $n$.
		
	\end{proof}

	\begin{figure}[h]
		\centering
		\begin{tikzpicture}[
			vertex/.style={circle, draw, fill=black, inner sep=2pt},
			every label/.style={font=\small}, 
			]
			\node[vertex, label=below:$v_1$] (v1) at (0,0) {};
			\node[vertex, label=below:$v_2$] (v2) at (2,0) {};
			\node[vertex, label=below:$v_3$] (v3) at (4,0) {};
			\node[vertex, label=below:$v_4$] (v4) at (6,0) {};
			\node[vertex, label=below:$v_5$] (v5) at (8,0) {};
			
			\node[vertex, label=above:$l_1$] (l1) at (0,1) {};
			\node[vertex, label=above:$l_2$] (l2) at (2,1) {};
			\node[vertex, label=above:$l_3$] (l3) at (4,1) {};
			\node[vertex, label=above:$l_4$] (l4) at (6,1) {};
			\node[vertex, label=above:$l_5$] (l5) at (8,1) {};
			
			\draw (v1) -- (v2) -- (v3) -- (v4) -- (v5);
			
			\draw (v1) -- (l1);
			\draw (v2) -- (l2);
			\draw (v3) -- (l3);
			\draw (v4) -- (l4);
			\draw (v5) -- (l5);
			
		\end{tikzpicture}
		\caption{Caterpillar Graph $G_5$}
		\label{fig1}
	\end{figure}
	
	\begin{figure}[h]
		\centering
		\begin{tikzpicture}[
			vertex/.style={circle, draw, fill=black, inner sep=2pt},
			every label/.style={font=\small}, 
			]
			\node[vertex, label=below:$v_1$] (v1) at (0,0) {};
			\node[vertex, label=below:$v_2$] (v2) at (2,0) {};
			\node[vertex, label=below:$v_3$] (v3) at (4,0) {};
			\node[vertex, label=below:$v_4$] (v4) at (6,0) {};
			\node[vertex, label=below:$v_5$] (v5) at (8,0) {};
			
			\node[vertex, label=above:$l_1$] (l1) at (0,1) {};
			\node[vertex, label=above:$l_2$] (l2) at (2,1) {};
			\node[vertex, label=above:$l_3$] (l3) at (4,1) {};
			\node[vertex, label=above:$l_4$] (l4) at (6,1) {};
			\node[vertex, label=above:$l_5$] (l5) at (8,1) {};
			
			\draw (v1) -- (v2) -- (v3) -- (v4) -- (v5);
			
			\draw [arrows = {-Latex[width'=0pt .5, length=10pt]}] (v1) -- (l1);
			\draw [arrows = {-Latex[width'=0pt .5, length=10pt]}] (l2) -- (v2);
			\draw [arrows = {-Latex[width'=0pt .5, length=10pt]}] (v2) -- (v3);
			\draw [arrows = {-Latex[width'=0pt .5, length=10pt]}] (v3) -- (l3);
			\draw [arrows = {-Latex[width'=0pt .5, length=10pt]}] (l4) -- (v4);
			\draw [arrows = {-Latex[width'=0pt .5, length=10pt]}] (v4) -- (v5);
			\draw [arrows = {-Latex[width'=0pt .5, length=10pt]}] (v5) -- (l5);
			
			\draw [arrows = {-Latex[width'=0pt .5, length=10pt]}] (l1) -- (l2);
			\draw [arrows = {-Latex[width'=0pt .5, length=10pt]}] (l3) -- (l4);
			\draw [arrows = {-Latex[width'=0pt .5, length=10pt]}] (l5) -- (v1);
		\end{tikzpicture}
		\caption{ $G_5$ in Figure \ref{fig1} has transformed into an Hamiltonian graph by three extra edges, showing that $\lambda_H(G_5) = 3.$}
		\label{fig2}
	\end{figure}

	\begin{figure}[h]
		\centering
		\begin{tikzpicture}[
			vertex/.style={circle, draw, fill=black, inner sep=2pt},
			every label/.style={font=\small}, 
			]
			\node[vertex, label=below:$v_1$] (v1) at (0,0) {};
			\node[vertex, label=below:$v_2$] (v2) at (2,0) {};
			\node[vertex, label=below:$v_3$] (v3) at (4,0) {};
			\node[vertex, label=below:$v_4$] (v4) at (6,0) {};
			
			\node[vertex, label=above:$l_1$] (l1) at (0,1) {};
			\node[vertex, label=above:$l_2$] (l2) at (2,1) {};
			\node[vertex, label=above:$l_3$] (l3) at (4,1) {};
			\node[vertex, label=above:$l_4$] (l4) at (6,1) {};
			
			\draw (v1) -- (v2) -- (v3) -- (v4);

			\draw (v1) -- (l1);
			\draw (v2) -- (l2);
			\draw (v3) -- (l3);
			\draw (v4) -- (l4);
			
		\end{tikzpicture}
		\caption{Caterpillar Graph $G_4$}
		\label{fig3}
	\end{figure}

	\begin{figure}[h]
		\centering]
		\begin{tikzpicture}[
			vertex/.style={circle, draw, fill=black, inner sep=2pt},
			every label/.style={font=\small}, 
			]
			\node[vertex, label=below:$v_1$] (v1) at (0,0) {};
			\node[vertex, label=below:$v_2$] (v2) at (2,0) {};
			\node[vertex, label=below:$v_3$] (v3) at (4,0) {};
			\node[vertex, label=below:$v_4$] (v4) at (6,0) {};
			
			\node[vertex, label=above:$l_1$] (l1) at (0,1.8) {};
			\node[vertex, label=above:$l_2$] (l2) at (2,1) {};
			\node[vertex, label=above:$l_3$] (l3) at (4,1) {};
			\node[vertex, label=above:$l_4$] (l4) at (6,1.8) {};
			
			\draw (v1) -- (v2) -- (v3) -- (v4);

			\draw (v2) -- (l2);
			\draw (v3) -- (l3);
			\draw (v4) -- (l4);
			\draw [arrows = {-Latex[width'=0pt .5, length=10pt]}] (v1) -- (l1);
			\draw [arrows = {-Latex[width'=0pt .5, length=10pt]}] (l2) -- (v2);
			\draw [arrows = {-Latex[width'=0pt .5, length=10pt]}] (v2) -- (v1);
			\draw [arrows = {-Latex[width'=0pt .5, length=10pt]}] (v3) -- (l3);
			\draw [arrows = {-Latex[width'=0pt .5, length=10pt]}] (l4) -- (v4);
			\draw [arrows = {-Latex[width'=0pt .5, length=10pt]}] (v4) -- (v3);
			
			\draw [arrows = {-Latex[width'=0pt .5, length=10pt]}] (l3) -- (l2);
			\draw [arrows = {-Latex[width'=0pt .5, length=10pt]}] (l1) -- (l4);
		\end{tikzpicture}
		\caption{ $G_4$ in Figure \ref{fig3} has transformed into an Hamiltonian graph by two extra edges, showing that $\lambda_H(G_4) = 2.$}
	\end{figure}

	Next, we consider another class of regular Caterpillar graph $G_{n(2)}$, for which of every vertex $v_i \in V(P_n)$, $l(v_i)=2$. Clearly, $|V(G_{n(2)})|=3n$. We examine the Hamiltonian complete number of $G_{n(2)}$. 
	
	\begin{theorem}
		Let $G_{n(2)}$ be a caterpillar graph such that for every $v_i \in P_n$, $l(v_i)=2$. Then, $\lambda_H( G_{n(2)} )=n$.
	\end{theorem}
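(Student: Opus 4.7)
The plan is to bound $\lambda_H(G_n)$ from below using Theorem \ref{thm1} and from above by producing an explicit Hamiltonian-completing edge set of size $n$.

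For the lower bound, I first observe that since every $v_i\in V(P_n)$ has $l(v_i)=2$, the graph $G_n$ has precisely $2n$ leaves. Applying Theorem \ref{thm1} directly yields $\lambda_H(G_n)\geq \lceil 2n/2\rceil = n$. This is the cheap half of the argument.

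For the upper bound I would set up notation as follows: write $V(P_n)=\{v_1,\dots,v_n\}$ and, for each $i\in[n]$, denote the two leaves adjacent to $v_i$ by $u_i$ and $w_i$. Thus $|V(G_n)|=3n$, and the original edges are the path edges $v_iv_{i+1}$ ($1\le i\le n-1$) together with the pendants $u_iv_i$ and $v_iw_i$ for $1\le i\le n$. I now define
\[
E^H(G_n)=\{\,w_iu_{i+1}:1\le i\le n-1\,\}\cup\{w_nu_1\},
\]
so $|E^H(G_n)|=n$ and $E^H(G_n)\cap E(G_n)=\emptyset$. Let $\bar G_n$ be the graph with $V(\bar G_n)=V(G_n)$ and $E(\bar G_n)=E(G_n)\cup E^H(G_n)$.

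The key step is to exhibit a spanning cycle of $\bar G_n$. Concretely, I would traverse
\[
u_1\to v_1\to w_1\to u_2\to v_2\to w_2\to u_3\to\cdots\to u_n\to v_n\to w_n\to u_1.
\]
Each triple $u_i\to v_i\to w_i$ uses the two original pendants at $v_i$, and each transition $w_i\to u_{i+1}$ (together with the closing edge $w_n\to u_1$) is a member of $E^H(G_n)$. This visits all $3n$ vertices exactly once and returns to $u_1$, so $\bar G_n$ is Hamiltonian. Hence $\lambda_H(G_n)\le n$, and combining with the lower bound gives equality.

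I do not anticipate a genuine obstacle: the construction is forced by the intuition that every leaf must acquire one new incidence, and since each added edge can serve at most two leaves simultaneously, pairing consecutive-block leaves $w_i$ with $u_{i+1}$ saturates this bound exactly. The only things to verify carefully are that $E^H(G_n)$ is disjoint from $E(G_n)$ (true because each $w_iu_{i+1}$ joins two leaves, whereas every edge of $G_n$ is incident to a path vertex except none, and no two leaves are adjacent in $G_n$), and that the stated sequence is indeed a cycle, which is immediate from inspection.
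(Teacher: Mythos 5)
Your proof is correct and follows essentially the same route as the paper: the lower bound $\lambda_H(G_n)\geq n$ from Theorem \ref{thm1} applied to the $2n$ leaves, and an explicit set of $n$ new leaf-to-leaf edges yielding the spanning cycle that threads each claw as leaf--center--leaf before jumping to the next claw. Your write-up is in fact slightly cleaner than the paper's (which misstates the direction of the inequality from Theorem \ref{thm1} as $\leq$ and describes the added edge set with an index range that does not literally match the intended $n$ edges), but the construction is identical up to relabeling the leaves.
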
 
	\begin{proof}
		Let $V(P_n) = {v_1,v_2, \cdots, v_n}$ be the vertex set of the central path $P_n$ of $G_n$ and let $V''(G_{n(2)} )=\left\lbrace u_1,u_2, u_3, u_4, \cdots, u_{2n-1}, u_{2n} \right\rbrace $ be leaves on $G_{n(2)} $. Now, $|V''(G_{n(2)} )|=2n$ and by an earlier result, $\lambda_H (G_{n(2)} ) \leq \frac{2n}{2}=n$. Conversely, suppose that $E^H(G_{n(2)} )$ is the Hamiltonian complete edge set and suppose that $E^H(G_{n(2)} )=\left\lbrace u_iu_{i+1}: 1 < i < 2n \right\rbrace$ $\cup \left\lbrace u_1u_n\right\rbrace $  and $|E^H(G_{n(2)})|=n-1+1=n$. Now, with $E(G_{n(2)} ) \cup E^H(G_{n(2)} )$, there exists a spanning cycle $C_{3n}=u_1 \rightarrow v_1 \rightarrow u_2 \rightarrow u_3 \rightarrow v_2 \rightarrow u_4 \rightarrow u_5 \rightarrow v_3 \rightarrow \cdots u_{n-2} \rightarrow u_{n-1} \rightarrow v_n \rightarrow u_n \rightarrow u_1$, which implies that $G_{n(2)} $ is Hamiltonian and $\lambda_H (G_{n(2)} ) \leq n$ and therefore, $\lambda_H(G_{n(2)} ) = n$.
	\end{proof}

	\begin{figure}[h]
		\centering
		\begin{tikzpicture}[
			vertex/.style={circle, draw, fill=black, inner sep=2pt},
			every label/.style={font=\small}, 
			]
			\node[vertex, label=below:$v_1$] (v1) at (0,0) {};
			\node[vertex, label=below:$v_2$] (v2) at (2,0) {};
			\node[vertex, label=below:$v_3$] (v3) at (4,0) {};
			\node[vertex, label=below:$v_4$] (v4) at (6,0) {};
			\node[vertex, label=below:$v_5$] (v5) at (8,0) {};
			
			\node[vertex, label=above left:] (l1a) at (-0.35,0.7) {}; 
			\node[vertex, label=above right:] (l1b) at (0.35,0.7) {}; 
			
			\node[vertex, label=above left:] (l2a) at (1.65,0.7) {}; 
			\node[vertex, label=above right:] (l2b) at (2.35,0.7) {}; 
			
			\node[vertex, label=above left:] (l3a) at (3.65,0.7) {}; 
			\node[vertex, label=above right:] (l3b) at (4.35,0.7) {}; 
			
			\node[vertex, label=above left:] (l4a) at (5.65,0.7) {}; 
			\node[vertex, label=above right:] (l4b) at (6.35,0.7) {}; 
			
			\node[vertex, label=above left:] (l5a) at (7.65,0.7) {}; 
			\node[vertex, label=above right:] (l5b) at (8.35,0.7) {}; 
			
			\draw (v1) -- (v2) -- (v3) -- (v4) -- (v5);
			
			\draw (v1) -- (l1a);
			\draw (v1) -- (l1b);
			
			\draw (v2) -- (l2a);
			\draw (v2) -- (l2b);
			
			\draw (v3) -- (l3a);
			\draw (v3) -- (l3b);
			
			\draw (v4) -- (l4a);
			\draw (v4) -- (l4b);
			
			\draw (v5) -- (l5a);
			\draw (v5) -- (l5b);
		\end{tikzpicture}
		\caption{A Caterpillar Graph $G_{5(2)}$ with central path of 5 vertices, where each vertex is connected to two leaves.}
		\label{fig:caterpillar_graph}
	\end{figure}
	
	\begin{figure}[h]
		\centering
		\begin{tikzpicture}[
			vertex/.style={circle, draw, fill=black, inner sep=2pt},
			every label/.style={font=\small}, 
			]
			\node[vertex, label=below:$v_1$] (v1) at (0,0) {};
			\node[vertex, label=below:$v_2$] (v2) at (2,0) {};
			\node[vertex, label=below:$v_3$] (v3) at (4,0) {};
			\node[vertex, label=below:$v_4$] (v4) at (6,0) {};
			\node[vertex, label=below:$v_5$] (v5) at (8,0) {};
			
			\node[vertex, label=above left:] (l1a) at (-0.45,1.3) {}; 
			\node[vertex, label=above right:] (l1b) at (0.35,0.7) {};
			
			\node[vertex, label=above left:] (l2a) at (1.65,0.7) {};
			\node[vertex, label=above right:] (l2b) at (2.35,0.7) {}; 
			
			\node[vertex, label=above left:] (l3a) at (3.65,0.7) {};
			\node[vertex, label=above right:] (l3b) at (4.35,0.7) {}; 
			
			\node[vertex, label=above left:] (l4a) at (5.65,0.7) {};
			\node[vertex, label=above right:] (l4b) at (6.35,0.7) {}; 
			
			\node[vertex, label=above left:] (l5a) at (7.65,0.7) {};
			\node[vertex, label=above right:] (l5b) at (8.65,1.3) {};
			
			\draw (v1) -- (v2) -- (v3) -- (v4) -- (v5);
			
			\draw (v1) -- (l1a);
			\draw (v1) -- (l1b);
			
			\draw (v2) -- (l2a);
			\draw (v2) -- (l2b);
			
			\draw (v3) -- (l3a);
			\draw (v3) -- (l3b);
			
			\draw (v4) -- (l4a);
			\draw (v4) -- (l4b);
			
			\draw (v5) -- (l5a);
			\draw (v5) -- (l5b);
			\draw [arrows = {-Latex[width'=0pt .5, length=10pt]}] (l1a) -- (l5b);
			\draw [arrows = {-Latex[width'=0pt .5, length=10pt]}] (l2a) -- (l1b);
			\draw [arrows = {-Latex[width'=0pt .5, length=10pt]}] (l3a) -- (l2b);
			\draw [arrows = {-Latex[width'=0pt .5, length=10pt]}] (l4a) -- (l3b);
			\draw [arrows = {-Latex[width'=0pt .5, length=10pt]}] (l5a) -- (l4b);
			\draw [arrows = {-Latex[width'=0pt .5, length=10pt]}] (v5) -- (l5a);
			\draw [arrows = {-Latex[width'=0pt .5, length=10pt]}] (l4b) -- (v4);
			\draw [arrows = {-Latex[width'=0pt .5, length=10pt]}] (v4) -- (l4a);
			\draw [arrows = {-Latex[width'=0pt .5, length=10pt]}] (l3b) -- (v3);
			\draw [arrows = {-Latex[width'=0pt .5, length=10pt]}] (v3) -- (l3a);
			\draw [arrows = {-Latex[width'=0pt .5, length=10pt]}] (l2b) -- (v2);
			\draw [arrows = {-Latex[width'=0pt .5, length=10pt]}] (v2) -- (l2a);
			\draw [arrows = {-Latex[width'=0pt .5, length=10pt]}] (l1b) -- (v1);
			\draw [arrows = {-Latex[width'=0pt .5, length=10pt]}] (v1) -- (l1a);
		\end{tikzpicture}
		\caption{Graph $G_{5(2)}$ is transformed into an Hamiltonian graph with 5 extra edges, showing that $\lambda_H(G_{5(2)}) = 5$.}
		\label{fig6}
	\end{figure}

	Next, we consider the regular caterpillar tree $ G_{n(k)} $ on a central path $P_n$, such that for each $v_i \in P_n$, $l(v_i) \geq 3$.
	\begin{remark}
		Let $G_n$ be a non Hamiltonian graph and $\lambda_H(G_{n(k)})$ be the Hamiltonian complete number of $G_{n(k)}$. It is trivial to show that if $\delta_H(G_{n(k)})$ is the least number of external edges required for $G_n$ to contain a spanning path, then $\delta_H(G_{n(k)}) = \lambda_H (G_{n(k)})-1$.  
	\end{remark}
	
	\begin{definition}
		Let $G_{n(k)}$ be a caterpillar graph. Then $P(G_{n(k)}) \in E(G_{n(k)})$ denotes the set of pendants on $G_{n(k)}$ 
	\end{definition}
	
	\begin{definition}
		Let $v_i \in P_n$ on a caterpillar graph $G_{n(k)}$, such that $v_i$ is incident to $k \geq 2$ number of leaves. Then, $v_i$ and the adjacent leaves induce a claw $CL_{v_i}$ in $G_{n(k)}$.
	\end{definition}
	
	\begin{remark}
		It should be noted that a claw is a form of a star. So, in some parts of the work, a claw is interchanged with a star. 
	\end{remark}
	
	\begin{theorem}
		Let $ G_{n(k)} $ be a $k$-regular caterpillar with central path $P_n$ such that for each $v_i \in V(P_n), l(v_i) \geq 3$. Then, $\lambda_H(G_{n(k)})=n(k-1)$. 
	\end{theorem}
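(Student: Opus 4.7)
The plan is to prove matching lower and upper bounds both equal to $n(k-1)$.

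For the lower bound, consider any Hamiltonian completion of $G_n(k)$, and let $C$ be a Hamiltonian cycle in it; so $C$ has $|V(G_n(k))| = n(k+1)$ edges. Let $B_1$ be the number of original pendants (members of $P(G_n(k))$) appearing in $C$, and let $C_1$ be the number of original spine edges (members of $E(P_n)$) appearing in $C$. Each spine vertex $v_i$ has degree exactly two in $C$, so at most two original edges of $G_n(k)$ incident to $v_i$ lie in $C$; summing over $i$, the total incidence count of original edges on the spine is at most $2n$. Since each pendant contributes once to this sum (its spine endpoint is unique) while each spine edge contributes twice (both endpoints are on the spine), I obtain $B_1 + 2C_1 \leq 2n$, and hence $B_1 + C_1 \leq 2n$. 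The number of new edges in any Hamiltonian completion is therefore at least $n(k+1) - 2n = n(k-1)$.

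For the upper bound, I exhibit an explicit $E^H(G_n(k))$ with $n(k-1)$ edges. Labelling the $k$ leaves adjacent to $v_i$ by $u_{i,1}, u_{i,2}, \ldots, u_{i,k}$, take
\[ E^H(G_n(k)) = \bigcup_{i=1}^n \{u_{i,j}u_{i,j+1} : 2 \leq j \leq k-1\} \,\cup\, \{u_{i,k}u_{i+1,1} : 1 \leq i \leq n-1\} \,\cup\, \{u_{n,k}u_{1,1}\}, \]
which has $n(k-2) + (n-1) + 1 = n(k-1)$ elements. I would then verify that
\[ u_{1,1} \to v_1 \to u_{1,2} \to u_{1,3} \to \cdots \to u_{1,k} \to u_{2,1} \to v_2 \to u_{2,2} \to \cdots \to u_{n,k} \to u_{1,1} \]
is a spanning cycle in $G_n(k) \cup E^H(G_n(k))$: at each $v_i$ the two cycle-edges $v_iu_{i,1}$ and $v_iu_{i,2}$ are original pendants, the intra-claw edges $u_{i,j}u_{i,j+1}$ for $2 \leq j \leq k-1$ lie in $E^H$, and the inter-claw transitions $u_{i,k} \to u_{i+1,1}$ together with the closing edge $u_{n,k}u_{1,1}$ also lie in $E^H$.

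The delicate step is the lower bound. Theorem~\ref{thm1} alone yields only $\lceil nk/2 \rceil$, which is strictly weaker than $n(k-1)$ once $k \geq 3$, so the proof must use a structural feature of the claw decomposition: each $CL_{v_i}$ is joined to the rest of the graph only through its center $v_i$, and that center has cycle-degree two. The crucial bookkeeping is the double-count of spine edges in the per-spine-vertex incidence sum, which sharpens the elementary observation "at most two original edges at each $v_i$" into the inequality $B_1 + C_1 \leq 2n$. The matching construction is designed to saturate this bound by forcing both cycle-edges at every $v_i$ to be original pendants and using only a single new inter-claw edge to link consecutive claws.
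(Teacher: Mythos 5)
Your proof is correct, and while your upper bound is essentially the paper's construction (a spanning path $u_{i,1}\to v_i\to u_{i,2}\to\cdots\to u_{i,k}$ inside each claw, one new edge $u_{i,k}u_{i+1,1}$ between consecutive claws, and a closing edge $u_{n,k}u_{1,1}$, for $n(k-2)+(n-1)+1=n(k-1)$ edges in all), your lower bound is a genuinely different argument. The paper works locally: it invokes Theorem~\ref{thm2} to get $\delta_H(CL_{v_i})=k-2$ for each claw, notes that the center $v_i$ cannot be an endpoint of the spanning path of its claw, and then adds $n-1$ inter-claw connection edges, summing to $\delta_H(G_n(k))\geq n(k-2)+n-1$ and hence $\lambda_H\geq n(k-1)$. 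That route leaves something to justify --- a Hamiltonian path of the whole graph need not restrict to a single spanning path on each claw, so summing the per-claw path-completion numbers as a global lower bound requires an extra argument the paper does not spell out. Your double count avoids this entirely: every original edge of $G_n(k)$ meets the spine, each spine vertex has degree two on any Hamiltonian cycle $C$, so $B_1+2C_1\leq 2n$ forces at least $n(k+1)-2n=n(k-1)$ new edges on $C$, which is a complete and rigorous bound in three lines. The trade-off is that your argument is specific to the global cycle count, whereas the paper's claw-by-claw viewpoint is the template it reuses (via $\delta_H$ of induced stars and segments) in the later irregular-caterpillar theorems; but as a proof of this particular statement, yours is the tighter and cleaner of the two.
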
 
	\begin{proof}
		Let $V(G_{n(k)})=V(G'_{n(k)}) \cup V(P_n)$, where $V(P_n)$ in the vertex set of the central path $P_n$ and $V(G'_{n(k)})$ the set of leaves such that $v_iu \in P(G_{n(k)})$, where $P(G_{n(k)})$ is the set of pendants on $G_{n(k)}$. Now let $V(P_n)=\left\lbrace v_1,v_2, \cdots , v_n \right\rbrace $ and $V(G'_n) = \left\lbrace u_1, u_2,u_3 \cdots u_{kn}\right\rbrace $ , where $k$ is the number of leaves incident to $v_i$ for all $i \in [1,n]$. Clearly, claw $CL_{v_i}$ is a star  and therefore by Theorem \ref{thm2} $\lambda_H(CL_{v_i})=k-1$, since $|V(CL_{v_i})|=k+1$. Thus, $\delta_H(CL_{v_i})=k-2$. Now, $\sum_{i=1}^{n}\delta_n(CL_{v_i})=n(k-2)$. It should be observed that $v_i$ is not an initial vertex or the terminal vertex of the spanning path in $CL_{v_i}$or else, $\delta_H(CL_{v_i})=k-1$. Thus, suppose we seek $\delta_n(G_{n(k)})$. Then, a class of $n-1$ edges exists in $E(G_{n(k)})$ such that each member connects adjacent claws in $G'_n(k)$. Furthermore, $v_iv_{i+1}$ is not a member of this class of edges in $E'(G_{n(k)})$. Therefore, $\delta_n(G_{n(k)}) \geq n(k+2)+n-1$. Thus, $\lambda_H(G_{n(k)}) \geq n(k-1)$. 
		\\Conversely, for $G_{n(k)}$, a $k-$regular caterpillar, let $V(P_n)=\left\lbrace v_1,v_2, \cdots, v_n\right\rbrace $ and the set of leaves adjacent to $v_i \in V(P_n)$ is $V(P(v_i))=\left\lbrace u_i(j):i\in[1,n], j \in [i,k]\right\rbrace $, such that $V(P(v_1))=\left\lbrace u _1(1), u_1(2), \cdots, u_1(k) \right\rbrace $. Let an Hamiltonian complete edge set $E^H(G_{n(k)})$, such that $E^H(G_{n(k)}) \cap E(G_{n(k)})= \emptyset$. Now, we trace a spanning path $P^i_{k+1}$ on each claw $CL_{v_i}$ without loss of generality such that $P^i_{k+i}=u_i(1) \rightarrow v_i \rightarrow u_i(2) \rightarrow u_i(3) \rightarrow, \cdots, u_i(k)$. It can be seen that $u_i(2)u_i(3), u_i(3)u_i(4),...,u_i(k-1)u_i(k)$ are $k-2$ in number in $E^H(G_{n(k)})$. For all the $n-$claws on $G_{n(k)}$, therefore there are at least  $n(k-2)$ edges in $E^H(G_{n(k)})$. Let there exists $u_i(k)u_{i+1}(1)$, for all $i \in [1,n-1]$. The subset $u_i(k)u_{i+1}(1)$ of $E^H(G_{n(k)})$contains $n-1$ edges. Thus there exists a spanning path $P_{(k+1)-1}$ of $Gn(k)$, such that $P_{(k+1)-1}= u_1(1) \rightarrow v_1 \rightarrow u_1(2) \rightarrow u_1(3) \rightarrow, \cdots u_1(k) \rightarrow u_2(1) \rightarrow v_2 \rightarrow u_2(2) \rightarrow u_2(3) \rightarrow, \cdots, u_{n-1}(k) \rightarrow u_n(1) \rightarrow v_n \rightarrow u_n(2) \rightarrow u_n(3) \rightarrow, \cdots , u_n(k) $. Thus, the resultant graph contains a spanning path. Now, suppose $E^H(G_{n(k)})$ contains an edge $u_n(k)u_1(1)$, then the cardinal number of $E^H(G_{n(k)})=n(k-1)$. Now, let $\bar{G}_{n(k)}=G_{n(k)} \cup E^H(G_{n(k)})$. Then $\bar{G_{n(k)}}$ contains a spanning cycle on $V(G_{n(k)})$, and thus, $\lambda_H(G_{n(k)}) \leq n(k-1)$. 
		
	\end{proof}
	Now we have completed the investigation into finding the Hamiltonian complete number for all regular caterpillar graphs with central path $P_n$ for which each vertex of $P_n$ is adjacent to $k$ leaves, where $k \geq 1$. Next we define irregular caterpillar graphs and investigate the Hamiltonian complete number of these class of graphs.  
	
	\section{Hamiltonian Complete number of Irregular caterpillar graphs}
	We define an irregular caterpillar, $G^i_n$ as a caterpillar on a central path $P_n$ such that for $v_i \in V(P_n)$, $l(v_i)=k$, where $k \in [0,r]$, where $r$ is a positive integer. We shall consider various cases.
	\begin{remark} \normalfont
		Let us consider two caterpillar graphs $ G^i_{3(a)} $ and $G^i_{3(b)}$ both on central path $P_3$, such that for $G^i_{3(a)}$, $l(v_1) = l(v_3) = 1$ and $l(v_2) = 2$, while for $G^i_{3(b)}$, $l(v_1) = l(v_2) = 1$ and $l(v_3) = 2$. (Or $l(v_1) = 2$ and $l(v_2) = l(v_3) =1$). It is easy to confirm that even though $G^i_{3(a)}$ and $G^i_{3(b)}$  contain the same number of pendants, $\lambda_H(G^i_{3(a)})=3$ and $\lambda_H(G^i_{3(a)})=2$. Hence caterpillar graphs that contain subgraph $G^i_{3(a)}$ may have different Hamiltonian complete number from those that do not contain $G^i_{3(a)}$, even if the graphs contain similar central path and the same number of pendants. 		
	\end{remark}

	\begin{figure}[ht]
		\centering
		
		\begin{tikzpicture}[vertex/.style={circle, draw, fill=black, inner sep=1.5pt}]
			\node[vertex] (v1) at (0,0) {};
			\node[vertex] (v2) at (2,0) {}; 
			\node[vertex] (v3) at (4,0) {}; 
			
			\draw (v1) -- (v2) -- (v3);
			
			\node[vertex] (l1) at (0,1) {}; 
			\node[vertex] (l2a) at (1.5,1) {}; 
			\node[vertex] (l2b) at (2.5,1) {}; 
			\node[vertex] (l3) at (4,1) {}; 
			
			\draw (v1) -- (l1);
			\draw (v2) -- (l2a);
			\draw (v2) -- (l2b); 
			\draw (v3) -- (l3);
			
			\node[below] at (v1) {$v_1$};
			\node[below] at (v2) {$v_2$};
			\node[below] at (v3) {$v_3$};
		\end{tikzpicture}
		\hspace{2cm} 
		\begin{tikzpicture}[vertex/.style={circle, draw, fill=black, inner sep=1.5pt}]
			\node[vertex] (u1) at (0,0) {};
			\node[vertex] (u2) at (2,0) {}; 
			\node[vertex] (u3) at (4,0) {}; 
			
			\draw (u1) -- (u2) -- (u3);
			
			\node[vertex] (m1) at (0,1) {}; 
			\node[vertex] (m2) at (2,1) {}; 
			\node[vertex] (m3a) at (3.5,1) {}; 
			\node[vertex] (m3b) at (4.5,1) {}; 
			
			\draw (u1) -- (m1);
			\draw (u2) -- (m2);
			\draw (u3) -- (m3a);
			\draw (u3) -- (m3b); 
			
			\node[below] at (u1) {$u_1$};
			\node[below] at (u2) {$u_2$};
			\node[below] at (u3) {$u_3$};
		\end{tikzpicture}
		
		\caption{Caterpillar graphs $G^i_{3(a)}$ and $G^i_{3(b)}$ have the same number of leaves but different Hamiltonian complete numbers as shown in the Figure \ref{fig8}.}
		\label{fig7}
	\end{figure}

	\begin{figure}[ht]
		\centering
		
		\begin{tikzpicture}[vertex/.style={circle, draw, fill=black, inner sep=1.5pt}]
			\node[vertex] (v1) at (0,0) {};
			\node[vertex] (v2) at (2,0.5) {}; 
			\node[vertex] (v3) at (4,0) {}; 
			
			\draw (v1) -- (v2) -- (v3);
			
			\node[vertex] (l1) at (0,1) {}; 
			\node[vertex] (l2a) at (1.5,1) {}; 
			\node[vertex] (l2b) at (2.5,1) {}; 
			\node[vertex] (l3) at (4,1) {}; 
			
			\draw (v1) -- (l1);
			\draw (v2) -- (l2a);
			\draw (v2) -- (l2b); 
			\draw (v3) -- (l3);
			\draw [arrows = {-Latex[width'=0pt .5, length=10pt]}] (l1) -- (l2a);
			\draw [arrows = {-Latex[width'=0pt .5, length=10pt]}] (l2b) -- (l3);
			\draw [arrows = {-Latex[width'=0pt .5, length=10pt]}] (v3) -- (v1);
			\draw [arrows = {-Latex[width'=0pt .5, length=10pt]}] (v1) -- (l1);
			\draw [arrows = {-Latex[width'=0pt .5, length=10pt]}] (l2a) -- (v2);
			\draw [arrows = {-Latex[width'=0pt .5, length=10pt]}] (v2) -- (l2b);
			\draw [arrows = {-Latex[width'=0pt .5, length=10pt]}] (l3) -- (v3);
			
			\node[below] at (v1) {$v_1$};
			\node[below] at (v2) {$v_2$};
			\node[below] at (v3) {$v_3$};
		\end{tikzpicture}
		\hspace{2cm} 
		\begin{tikzpicture}[vertex/.style={circle, draw, fill=black, inner sep=1.5pt}]
			\node[vertex] (u1) at (0,0) {};
			\node[vertex] (u2) at (2,0) {}; 
			\node[vertex] (u3) at (4,0) {}; 
			
			\draw (u1) -- (u2) -- (u3);
			
			\node[vertex] (m1) at (0,2) {}; 
			\node[vertex] (m2) at (2,2) {}; 
			\node[vertex] (m3a) at (3.5,1) {}; 
			\node[vertex] (m3b) at (4.5,1) {}; 
			
			\draw (u1) -- (m1);
			\draw (u2) -- (m2);
			\draw (u3) -- (m3a);
			\draw (u3) -- (m3b); 
			\draw [arrows = {-Latex[width'=0pt .5, length=10pt]}] (m1) -- (m3a);
			\draw [arrows = {-Latex[width'=0pt .5, length=10pt]}] (m3b) -- (m2);
			\draw [arrows = {-Latex[width'=0pt .5, length=10pt]}] (u1) -- (m1);
			\draw [arrows = {-Latex[width'=0pt .5, length=10pt]}] (m3a) -- (u3);
			\draw [arrows = {-Latex[width'=0pt .5, length=10pt]}] (u3) -- (m3b);
			\draw [arrows = {-Latex[width'=0pt .5, length=10pt]}] (m2) -- (u2);
			\draw [arrows = {-Latex[width'=0pt .5, length=10pt]}] (u2) -- (u1);
			\node[below] at (u1) {$u_1$};
			\node[below] at (u2) {$u_2$};
			\node[below] at (u3) {$u_3$};
		\end{tikzpicture}
		
		\caption{The above shows $\lambda_H(G^i_{3(a)}) = 3$ and $\lambda_H(G^i_{3(b)}) =2$ }
		\label{fig8}
	\end{figure}

	\begin{remark} \normalfont
		We define $U$ as the set of all subgraphs $G^i_{3(a)}$ in caterpillar graph $G^i_n$ where for every $v_i \in P_n$, the central path of $G^i_n$, $l(v_i) \in [1,2]$. We set $|U|$ as the non-negative  cardinal number of $U$.
	\end{remark}
	\begin{lemma}
		Let $n \geq 4$ and $G_n$ a caterpillar graph with $|U| = 1$. Suppose that for all $v_i \in P_n$, the central path of $G_n$, $l(v_i) \in [1,2]$. Then $ \lambda_H(G_n) \geq \lceil\frac{\sum_{i=1}^{n}l(v_i)}{2} \rceil$.
	\end{lemma}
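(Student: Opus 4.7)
The plan is to reduce the inequality directly to Theorem \ref{thm1}. The central observation is that under the hypothesis $l(v_i) \in [1,2]$ for every $v_i \in V(P_n)$, the quantity $\sum_{i=1}^n l(v_i)$ coincides exactly with the total number of leaves of $G_n$; once that identification is made, the conclusion is immediate.

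To establish the identification, I would first verify that no vertex of the central path is itself a leaf of $G_n$. Interior vertices already have degree at least $2$ from their two neighbors on $P_n$, and each endpoint $v_1$, $v_n$ has degree at least $2$ by combining its one central-path neighbor with the (at least one) pendant guaranteed by $l(v_i) \geq 1$. Since every remaining vertex of $G_n$ is, by the structure of a caterpillar, a pendant attached to some $v_i$, the leaves of $G_n$ are precisely these pendants, and so they number
\[
\sum_{i=1}^{n} l(v_i).
\]

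Applying Theorem \ref{thm1} with this leaf count then yields
\[
\lambda_H(G_n) \geq \left\lceil \frac{\sum_{i=1}^n l(v_i)}{2} \right\rceil,
\]
which is the stated bound. I do not foresee any real obstacle. The hypotheses $n \geq 4$ and $|U| = 1$ are not actually needed for this lower bound and will presumably be used only in the forthcoming matching upper bound; the only point requiring care is the endpoint degree check, and it is exactly the assumption $l(v_i) \geq 1$ (rather than $l(v_i) \geq 0$) that makes it go through.
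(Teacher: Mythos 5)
Your proposal is correct and follows essentially the same route as the paper: identify $\sum_{i=1}^{n} l(v_i)$ as the total leaf count of $G_n$ and invoke Theorem \ref{thm1}. The paper's own proof is exactly this one-line application of the earlier result (it does not even spell out the endpoint degree check you include, nor does it use $n \geq 4$ or $|U|=1$, confirming your observation that those hypotheses are not needed for the lower bound).
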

	\begin{proof}
		Suppose that $P_n$ is the central path of $G_n$ and for all $v_i \in V(P_n)$, let $\pi(v_i) = \sum_{1}^{n}l(v_i)$. From earlier result, $\lambda_H (G_n) \geq \lceil \frac{\pi(v_i)}{2} \rceil$.
	\end{proof}
	\begin{remark} \normalfont
		It should be observed that the position of $G_k \in G_n$ is particularly important in determining the Hamiltonian complete number of $G_n$.  
		\\
		\\
		Next we investigate the $\lambda_H-$number of an irregular caterpillar graph $G_n$, such that for every vertex $v_i$ on the central path $P_n$ of $G_n$, $l(v_i) \geq 3$. We consider variants of this class of caterpillar graphs.  
	\end{remark}
	\begin{theorem}
		Let $G_n$ be a caterpillar graph with central $P_n$ and suppose that for each $v_i \in V(P_n)$, $l(v_i) \geq  3$. Then, $\lambda_H (G) = \sum_{i=1}^{n}l(v_i)-n$.
	\end{theorem}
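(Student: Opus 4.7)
The plan is to prove both inequalities $\lambda_H(G_n) \leq \sum_{i=1}^{n} l(v_i) - n$ and $\lambda_H(G_n) \geq \sum_{i=1}^{n} l(v_i) - n$ separately, following the template of the preceding $k$-regular theorem but with the constant $k$ replaced throughout by the local leaf count $l(v_i)$.

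For the upper bound, I would label the leaves adjacent to $v_i$ as $u_i(1), \ldots, u_i(l(v_i))$ and take $E^H(G_n)$ to be the union of three explicit families: the intra-claw chains $u_i(j)\, u_i(j{+}1)$ for $j \in [2, l(v_i)-1]$, contributing $\sum_i (l(v_i)-2) = \sum l(v_i) - 2n$ edges; the inter-claw bridges $u_i(l(v_i))\, u_{i+1}(1)$ for $i \in [1, n-1]$, contributing $n-1$ edges; and a single closing edge $u_n(l(v_n))\, u_1(1)$. The total is exactly $\sum l(v_i) - n$, and one verifies directly that $G_n \cup E^H(G_n)$ carries the spanning cycle
\[
u_1(1) \rightarrow v_1 \rightarrow u_1(2) \rightarrow \cdots \rightarrow u_1(l(v_1)) \rightarrow u_2(1) \rightarrow v_2 \rightarrow \cdots \rightarrow u_n(l(v_n)) \rightarrow u_1(1),
\]
exactly parallel to the construction in the $k$-regular case.

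For the lower bound, I would take any Hamiltonian cycle $C$ in $G_n \cup E^H$ and count $E^H \cap E(C)$-endpoints landing on leaves. Each leaf $u$ has $G_n$-degree $1$ and $C$-degree $2$, so $u$ contributes at least one such endpoint. For each $v_i$, write $\alpha_i \in \{0,1,2\}$ for the number of pendants at $v_i$ that lie in $C$; since $l(v_i) \geq 3$, the remaining $l(v_i) - \alpha_i \geq l(v_i) - 2$ leaves at $v_i$ have \emph{both} incident $C$-edges drawn from $E^H$. Hence the total number of $E^H \cap E(C)$-endpoints at leaves is
\[
\sum_{i=1}^{n} \bigl(\alpha_i + 2(l(v_i) - \alpha_i)\bigr) \;=\; 2\sum_i l(v_i) - \sum_i \alpha_i \;\geq\; 2\sum_i l(v_i) - 2n,
\]
and dividing by $2$ (each edge of $E^H$ contributes at most two leaf-endpoints) gives $|E^H| \geq \sum l(v_i) - n$.

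The main obstacle, relative to the preceding theorem, is making the lower bound fully rigorous: the earlier claw-by-claw argument via $\delta_H(CL_{v_i})$ implicitly assumed that the Hamiltonian cycle of the completed graph restricts to a contiguous spanning sub-path of each $CL_{v_i}$, which is not automatic. The global endpoint-counting above avoids that issue, and it uses the hypothesis $l(v_i) \geq 3$ only through the equality $\min(2, l(v_i)) = 2$ -- precisely the step that would fail for $l(v_i) = 1$ (where the correct answer is $\lceil n/2 \rceil$, not $0$), confirming that the hypothesis is essential rather than cosmetic.
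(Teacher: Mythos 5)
Your proposal is correct, and while your upper bound is essentially the paper's construction (intra-claw chains of $l(v_i)-2$ edges, $n-1$ inter-claw bridges, one closing edge, for a total of $\sum_i l(v_i) - n$), your lower bound takes a genuinely different route. The paper argues claw by claw: it sums $\delta_H(CL_{v_i}) = l(v_i)-2$ over the $n$ induced stars and then adds connector edges between consecutive claws, which --- as you correctly observe --- implicitly assumes the Hamiltonian cycle of the completed graph meets each claw in a single contiguous subpath; that assumption is plausible but is nowhere justified, so the paper's lower bound has a real gap. Your double-counting of leaf-endpoints of $E^H$-edges along an arbitrary Hamiltonian cycle $C$ closes that gap: each leaf forces at least one $E^H$-endpoint, each $v_i$ can absorb at most two pendants into $C$ (so at least $l(v_i)-2$ of its leaves force two $E^H$-endpoints), and dividing the resulting count $2\sum_i l(v_i) - \sum_i \alpha_i \ge 2\sum_i l(v_i) - 2n$ by two gives $|E^H| \ge \sum_i l(v_i) - n$ with no structural assumption on $C$. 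What each approach buys: the paper's is shorter and makes the ``stars joined in a chain'' intuition visible, while yours is airtight and strictly more general --- note that your bound $\alpha_i \le \min(2, l(v_i))$ only requires $l(v_i) \ge 2$ (not $\ge 3$) to yield $\sum_i \alpha_i \le 2n$, so your argument also recovers the lower bound $n$ of the paper's Theorem 3.2; your closing remark slightly misstates where the hypothesis bites, but this does not affect correctness for the theorem as stated.
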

	\begin{proof}
		Let $v_i \in V(P_n)$ and let $v_iu_{i(1)}, v_iu_{i(2)}, \cdots, v_iu_{i(t)}$ be the pendants adjacent to $v_i$. Clearly, $l(v_i)=t$. It can be seen that the vertices $v_i, u_{i(1)}, u_{i(2)}, \cdots, u_{i(t)}$ induce a star $S(i)_{t+1}$. From the earlier result, $\lambda_H(S(i)_{t+1})=t-2=l(v_i)-2$. Thus, $\delta_H(S(i)_{t+i})=t(v_i)-2$. Thus, for all other stars induced by all the remaining vertices on $P_n$ and there adjacent leaves, the sum total of their Hamiltonian path complete number will be $\delta_H(\cup_{k=1}^nS(k))= \sum_{k=1}^{n}(l(v_k)-2)=\sum_{k-1}^{n}l(v_k)-2n$. Furthermore, for $G_n$ to contain a spanning path, every pair of induced neighboring vertices $v_k, v_{k+1}$ and their adjacent leaves will be connected by an external edge $e_k \notin E(G_n)$. Therefore, $\delta_H(G_n) \geq \sum_{k=1}^{n}l(v_k)-n$.
		
		Conversely, suppose that $E^H(G_n)$ is the Hamiltonian edge set of $G_n$, populated by external edges necessary to make $G_n$ Hamiltonian. Suppose, without loss of generality, that $v_1$ is the initial vertex on $P_n$ and $u_1,u_2,\cdots u_t \in V(G_n)$ are leaves that are adjacent to $v_1$, for all $i \in [1,t]$. There exists a path from $u_1 \rightarrow v_1 \rightarrow u_2 \rightarrow u_3 \rightarrow \cdots \rightarrow u_t$. Clearly, edges $u_2u_3, u_3u_4, \cdots , u_{t-1}u_t \in E^H(G_n)$. We repeat this for the other induced stars on $G_n$. Now, let $u_i(1)$ and $u_i(k)$ be two vertices on $S(i)_{t-1}$, which, without loss of generality, are considered as the first and last leaves respectively adjacent to vertex $v_i \in P_n$, for all $i \in [1,n]$. Then edges $u_1(k)u_2(1), u_2(k)u_3(1), \cdots, u_{n-1}(k)u_n(1), u_n(k)u_1(1)$ are edges in $E^H(G_n)$, which, in all, are $n$ external edges, connecting them to $G_n$ along with the other established $E^H(G_n)$ members, there is a spanning cycle in the new graph, and $|E^H(G_n)|= \sum_{i=1}^{n} l(v_i)-n$. 
	\end{proof}
	
	\begin{definition} \normalfont
		Let $G_n$ be a caterpillar graph on a central path $P_n$ and suppose that $\bar{P}$ is the set of subgraphs of $P_n$, such that members of $\bar{P}$ are subpaths of $P_n$ for which $\bar{P_t} \in \bar{P}$, $3 \leq t \leq n$, $l(v_1), l(v_t) \geq 2$ and $l(v_i) \in [0,1]$ for all intermediate $v_1$, $i\ \in [2,t-1] \in \bar{P_t}$. That is, $\bar{P}$ consists of subpaths of $P_n$ such that at least one of the end vertices is adjacent to at least two leaves while the rest of the intermediate vertices are adjacent to none or at most one leaf. We call $\bar{P}$ the set of $(0,1)-$leaf segment paths on $P_n$ and the $n(\bar{P})$ is the cardinal number of $\bar{P}$.
	\end{definition}
	\begin{definition} \normalfont
		If the intermediate vertices of a $(0,1)-$segment path is not adjacent to any leaf on $G_n$, then we call it a $0-leaf$ segment path of $P_n$. The cardinal number of the set of $0-$leaf segment path of $P_n$ is denoted as $P(0)$.
	\end{definition} 
	
	\begin{lemma}
		Suppose that $\bar{P_k}$ is a $0-$leaf segment on $P_n$ of $G_n$ with $l(v_1)=l(v_2)= \cdots l(v_{k-1})=0$ and $l(v_k) \geq 2$. Then $\delta_H(\bar{P_k})=l(v_k)-1$
	\end{lemma}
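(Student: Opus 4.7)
The plan is to establish the identity $\delta_H(\bar{P_k}) = l(v_k) - 1$ by matching upper and lower bounds. Set $t = l(v_k)$, and let $u_1, \ldots, u_t$ denote the leaves adjacent to $v_k$. Because $v_1, \ldots, v_{k-1}$ have no attached leaves, the subgraph $\bar{P_k}$ is a tree on $k + t$ vertices and therefore has exactly $k + t - 1$ edges: the $k - 1$ spine edges $v_i v_{i+1}$ and the $t$ star edges $v_k u_j$.

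For the upper bound, I would exhibit a spanning path directly. Adjoining the $t - 1$ new edges $u_1 u_2,\ u_2 u_3,\ \ldots,\ u_{t-1} u_t$ yields the spanning path
\[
v_1 \rightarrow v_2 \rightarrow \cdots \rightarrow v_{k-1} \rightarrow v_k \rightarrow u_1 \rightarrow u_2 \rightarrow \cdots \rightarrow u_t,
\]
which visits each vertex of $\bar{P_k}$ exactly once, so $\delta_H(\bar{P_k}) \le t - 1$.

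For the lower bound, I would count the original edges that can possibly appear in a spanning path. Let $E^H$ be any set of new edges whose addition to $\bar{P_k}$ produces a spanning path $P$; since $P$ has exactly $k + t - 1$ edges, it contains at least $(k + t - 1) - |E^H|$ edges of $E(\bar{P_k})$. The vertex $v_k$ has degree at most $2$ in $P$, so at most two of the $t + 1$ original edges incident to $v_k$, namely those in $\{v_{k-1}v_k\} \cup \{v_k u_j : 1 \le j \le t\}$, can lie in $P$. The only other original edges are the $k - 2$ spine edges $v_1 v_2, \ldots, v_{k-2} v_{k-1}$, and none of them are incident to $v_k$. Hence $P$ uses at most $(k - 2) + 2 = k$ original edges, forcing $|E^H| \ge (k + t - 1) - k = t - 1$.

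The main obstacle is locating a sharp enough lower bound. Applying Theorem~\ref{thm1} to the $t + 1$ leaves of $\bar{P_k}$ (the $u_j$'s together with $v_1$) only gives $\lambda_H(\bar{P_k}) \ge \lceil (t+1)/2 \rceil$, and hence $\delta_H(\bar{P_k}) \ge \lceil (t+1)/2 \rceil - 1$, which falls short of $t - 1$ whenever $t \ge 3$. The decisive observation is the local saturation at $v_k$: because every $u_j$ has $v_k$ as its unique neighbour in $\bar{P_k}$, and $v_k$ can contribute at most two edges to a Hamiltonian path, all but at most two of the star edges must be replaced by new edges among the $u_j$'s, which is exactly what pins the value at $t - 1$.
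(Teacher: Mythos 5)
Your proposal is correct, and both bounds check out: the spanning path $v_1\rightarrow\cdots\rightarrow v_k\rightarrow u_1\rightarrow\cdots\rightarrow u_t$ does use exactly $t-1$ new edges, and the counting argument (a spanning path on $k+t$ vertices has $k+t-1$ edges, of which at most $k$ can be original because $v_k$ has degree at most $2$ in any path and all $t$ pendant edges plus $v_{k-1}v_k$ meet at $v_k$) correctly forces $|E^H|\geq t-1$. The upper-bound construction is essentially the one the paper uses (the paper phrases it as a choice between linking the segment to the claw by an external edge $v_{k-1}u_1$ and then paying $\delta_H=l(v_k)-2$ inside the star, or forcing $v_k$ to be an endpoint of the star's spanning path and paying $\lambda_H(G(v_k))=l(v_k)-1$; your path realizes the second option). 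Where you genuinely diverge is the lower bound: the paper does not really argue it, instead leaning on the previously established star formulas $\lambda_H(S_n)=n-1$ and $\delta_H=\lambda_H-1$ together with an informal case split on how the segment attaches to the claw, which does not rule out other uses of external edges. Your degree-saturation count at $v_k$ is self-contained and airtight, at the modest cost of being longer than the paper's two-line appeal to earlier results; it is the stronger version of the argument and is the observation one actually needs to make the lemma rigorous.
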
   
	
	\begin{proof}
		Suppose that $u_1,u_2, \cdots, u_s$ are the leaves such that $v_ku_1, v_ku_2, \cdots , v_ku_s$ are pendants, then it is easy to see that $\delta_H (\bar{P_k})=l(v_k)-1$ either by introducing external edges $v_{k-1}u_1$ WLOG from $E^H(G_n)$ and $\delta_H(G(v_k))=l(v_k)-2$, where $G(v_k)$ is a subgraph induced by $v_k$ and it adjacent leaves, or by considering $\lambda_H(G(v_k))=l(v_k)-1$. 
	\end{proof}
	
	\begin{remark} \normalfont
		We adopt the external edge option such that $v_{k-1}u_1 \in E^H(G_n)$ in our subsequent proofs as the link between $0-$leaf segment and a claw. 
	\end{remark}
	\begin{theorem}
		Let $G_n$ be a caterpillar graph on a central path $P_n$. Suppose that for all $v_i \in V(P_n)$, either $l(v_i) \geq 2$ or $l(v_i)=0$. Suppose further that $G_n$ contains $P(0)$ number of $0-$leaf segment paths on $P_n$. Then, $\lambda_H(G_n)=P(0)+  \sum_{i=1; l(v_i) \geq 2}^{n}(l(v_i)-1)$.
	\end{theorem}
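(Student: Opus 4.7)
The plan is to follow the two-pronged pattern of the earlier caterpillar theorems: bound $\delta_H(G_n) = \lambda_H(G_n)-1$ from below by summing the Hamiltonian-path-completion costs of the structural pieces of $G_n$, and then exhibit an explicit $E^H(G_n)$ of matching cardinality together with a spanning cycle in $G_n \cup E^H(G_n)$. I partition $V(P_n)$ into maximal claw clusters (consecutive runs of claw vertices) separated by bare runs, noting that each bare run sits strictly between two claws and so contributes one to $P(0)$. Write $L = \sum_{l(v_i)\geq 2} l(v_i)$ and $c$ for the number of claws, so that $\sum(l(v_i)-1) = L-c$. Throughout, I assume the implicit regularity conditions that the formula requires: $v_1$ and $v_n$ are claws, and no claw has both central-path neighbors bare. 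Without these, the leaf-count bound $\lceil n/2 \rceil$ of Theorem~\ref{thm1} can force strictly more edges than the claimed formula.

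For the lower bound, each claw $CL_{v_i}$, viewed as a star $S_{l(v_i)}$, contributes $\delta_H(CL_{v_i}) = l(v_i)-2$ by Theorem~\ref{thm2} together with the Remark, provided $v_i$ may sit interior to its leaf-chain. For a claw $v_i$ adjacent to a bare neighbor $v_{i\pm 1}$, however, $v_{i\pm 1}$ has degree $2$ in $G_n$, so the central-path edge at $v_i$ is forced into the spanning structure; then $v_i$ must serve as an endpoint of its claw's sub-path, raising the contribution to $l(v_i)-1$, as captured by the preceding Lemma. Since each of the $P(0)$ bare runs has two flanking claws, this raises the within-claw sum by $2P(0)$ above the baseline $L-2c$. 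Independently, each of the $c-1-P(0)$ pairs of consecutively-indexed adjacent claws (those not separated by a bare run) needs one leaf-to-leaf bridging edge; bare runs themselves require none, since the existing central-path edges suffice. Summing yields $\delta_H(G_n) \geq (L-2c) + 2P(0) + (c-1-P(0)) = L - c + P(0) - 1$, and adding $1$ to close the cycle gives $\lambda_H(G_n) \geq P(0) + L - c = P(0) + \sum(l(v_i)-1)$.

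For the upper bound, I would construct $E^H(G_n)$ explicitly, in the pattern of the preceding regular-caterpillar proofs. At each claw $v_i$ with no bare neighbor, include the $l(v_i)-2$ edges chaining its leaves with $v_i$ internal, as in the $k$-regular construction; at each claw $v_i$ adjacent to a bare run, include the $l(v_i)-1$ edges chaining all leaves so that $v_i$ lies at one end, adjacent through the original central-path edge to the first bare vertex of the run. Add one leaf-to-leaf splice between each adjacent-claw pair within a cluster, then one closing edge between the two free leaf-endpoints at the extremes of the assembled spanning path. A direct count yields $|E^H(G_n)| = P(0) + \sum(l(v_i)-1)$, and tracing through the construction exhibits a Hamiltonian cycle in $G_n \cup E^H(G_n)$. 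The main obstacle is the case-analysis bookkeeping at cluster boundaries and at the path-endpoints: verifying that the $2P(0)$ extra within-claw edges (from bare-adjacent claws), the $P(0)$ bridges saved (replaced by central-path connections through bare runs), and the single closing edge combine to exactly the $P(0)$ appearing in the formula, and explicitly excluding the degenerate case of a claw flanked by bare vertices on both sides, where the formula fails and additional edges are required.
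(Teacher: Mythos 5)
Your overall route is the same as the paper's: split $G_n$ into the stars induced by the claw vertices and the $0$-leaf segments between them, charge $l(v_i)-1$ new edges to each claw and one further edge to each $0$-leaf segment, and realize the total with an explicit spanning cycle. Your version is in places more careful than the paper's: you separate the lower bound on $\delta_H(G_n)=\lambda_H(G_n)-1$ from the construction, and you correctly notice that the statement silently needs $l(v_1),l(v_n)\geq 2$ (for $n=2$ with $l(v_1)=2$, $l(v_2)=0$ the graph is the star $S_3$, so $\lambda_H=2$ by Theorem \ref{thm2}, while the formula gives $1$). That hypothesis is missing from the theorem as printed, not from your argument.

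The genuine gap is your second ``regularity condition.'' You assert that the formula fails when a claw has bare neighbours on both sides, and you exclude that configuration, so the cases you prove do not exhaust the theorem. In fact the formula is still correct there. Take $n=5$ with $l(v_1)=l(v_3)=l(v_5)=2$ and $l(v_2)=l(v_4)=0$, with leaves $a_1,a_2$, $c_1,c_2$, $b_1,b_2$ respectively: here $P(0)=2$ and $\sum(l(v_i)-1)=3$, and the cycle $a_1\to v_1\to v_2\to v_3\to v_4\to v_5\to b_1\to b_2\to c_1\to c_2\to a_2\to a_1$ uses exactly the five new edges $b_1b_2$, $b_2c_1$, $c_1c_2$, $c_2a_2$, $a_2a_1$, matching the formula and already refuting the claim that additional edges are required. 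What actually happens is that the doubly flanked centre $v_i$ has no free slot at all, so its leaves form a segment of the cycle not containing $v_i$; that costs $l(v_i)-1$ internal edges plus two connectors, rather than your $l(v_i)-2$ plus two separate ``endpoint'' increments, and the totals still balance to $P(0)+\sum(l(v_i)-1)$. Your proof needs to handle this configuration rather than discard it: as written, the within-claw charge justified by ``$v_i$ must be an endpoint of its claw's sub-path'' does not apply to it. A secondary (and lesser) weakness, shared with the paper, is that your lower bound treats the spine edges at bare vertices as forced, whereas an optimal $E^H(G_n)$ may instead attach new edges to bare vertices and abandon those spine edges, so the additivity of the local charges still needs an argument.
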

	\begin{proof}
		Let us consider $v_i \in V(P_n)$ such that $l(v_i) \geq 2$. Suppose that $v_i$ and the adjacent leaves induce a star $S_t,$ $t \geq 2$.The $\delta_H(S_t) = t-2$. Now, for $G_n$ to be Hamiltonian, there exists an edge connecting $S_t$ to a $0-$leaf segment path $P_w$, which is adjacent to $S_t$. Thus the edge, either $v_iv_{i+1}$, with $v_{i+1} \in V(P_n)$ or  $u_tv_{i+1}$, where $u_t$ is a leave on $S(t)$. Note that if we choose $v_iv_{i+1}$, then $G_n$ requires $t-1$ Hamiltonian complete edges on $S_t$. If $u_tv_{i+1}$ is selected, then it is an edge in $E^H(G_n)$. Then it can be seen that $\delta_H(S_t)=t-2$ and hence $t-1$ member of $E^H(G_n)$ is required as well. From the last lemma and remark, every $0-$leave segment on $P_n$ contributes an edge to $E^H(G_n)$ and thus, $|E^H(G_n)| = \lambda_H(G_n)=P(0)+ \sum_{i=1; l(v_i) \geq 2}^{n}(l(v_i-1))$.  
	\end{proof}
	Next, we define the concept of a deserted pendant
	
	\begin{definition} \normalfont
		Let $P_k, k\geq 3$ be a subgraph of a central path $P_n$ of a caterpillar graph $G_n$ and let $v_t \in V(P_k)$ such that $l(v_t)=1$. If $l(v_{t-1})=l(v_{t+1})=0$, then we say that pendant $v_tu_t$ is a deserted pendant.
	\end{definition}
	\begin{lemma}
		Suppose that for $k \geq 3$, $P_k$ is a subgraph of a central path $P_n$ of a caterpillar graph $G_n$, and $P_k$ contains a deserted pendant. Then, $\delta_H(P_k) \geq 1$. 
	\end{lemma}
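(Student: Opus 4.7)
The plan is to argue directly that the caterpillar segment attached to $P_k$ admits no spanning path, so at least one external edge must be added. Write $H_k$ for the subgraph of $G_n$ consisting of $V(P_k)$ together with all leaves of $G_n$ incident to vertices of $P_k$, and their pendant edges. Since $P_k$ contains the deserted pendant $v_t u_t$ with $k \geq 3$, the three vertices $v_{t-1}, v_t, v_{t+1}$ all lie in $V(P_k)$, with $l(v_{t-1}) = l(v_{t+1}) = 0$ and $l(v_t) = 1$.

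The key observation is that $v_t$ is a cut vertex of $H_k$ whose removal yields at least three connected components. First, the single leaf $u_t$ becomes isolated, because its only neighbor in $H_k$ was $v_t$. Second, the subpath from $v_1$ to $v_{t-1}$ (together with all leaves attached to those vertices in $G_n$) forms a connected component, nonempty since $v_{t-1} \in V(P_k)$. Third, the subpath from $v_{t+1}$ to $v_k$ (with its attached leaves) forms a connected component, nonempty since $v_{t+1} \in V(P_k)$. These components are pairwise separated in $H_k \setminus \{v_t\}$ since the only paths between them in $H_k$ pass through $v_t$.

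I would then invoke the standard fact that in any graph admitting a spanning path, deletion of a single vertex leaves at most two connected components, since an internal vertex of a path leaves exactly two pieces and an endpoint leaves one. Applying this to $H_k$ and the vertex $v_t$, we conclude that $H_k$ has no spanning path. Consequently, at least one edge not in $E(H_k)$ must be added before a spanning path can exist, i.e.\ $\delta_H(P_k) \geq 1$.

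The only real obstacle is notational: the statement writes $\delta_H(P_k)$ even though $P_k$ is introduced as a subgraph of the central path (which by itself trivially has a spanning path). The intended meaning must be $\delta_H$ of the caterpillar segment $H_k$ built from $P_k$ and its attached pendants, which is consistent with how $\delta_H$ is used elsewhere in the paper (e.g.\ for claws and stars). Once this convention is fixed, the cut vertex argument is short and self-contained.
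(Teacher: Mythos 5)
Your proof is correct, and it is tighter than the paper's own argument, which is essentially a one-line assertion. The paper simply states that for $P_k$ to attain a Hamiltonian path an external edge joining the deserted leaf $u_i$ to $v_{t-1}$ or $v_{t+1}$ must be introduced; it never explains why no spanning path exists without such an edge, and in particular it does not rule out the possibility that $u_i$ serves as an endpoint of a spanning path (the real reason this fails is that the path would then be trapped on one side of $v_t$). Your cut-vertex argument supplies exactly the missing justification: $v_t$ separates $H_k \setminus \{v_t\}$ into three components ($\{u_t\}$ and the two flanking subpaths with their leaves), while a traceable graph loses at most two components upon deleting a single vertex, so $H_k$ is not traceable and $\delta_H \geq 1$. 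Both arguments exploit the same local obstruction --- the claw centred at $v_t$ with branches $u_t$, $v_{t-1}$, $v_{t+1}$ --- but yours is complete and also makes transparent why the added edge need not be incident to $u_t$ (it suffices to merge any two of the three components away from $v_t$), a subtlety the paper's phrasing obscures. Your remark on the abuse of notation $\delta_H(P_k)$, which must be read as $\delta_H$ of the segment together with its attached pendants, is also well taken and consistent with the paper's usage for claws and stars.
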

	\begin{proof}
		Suppose that $v_t$ is a vertex on $P_k$ such that $v_tu_i$ is a deserted pendant. Then, $l(v_{k-1})=l(v_{k+1})=0$. Thus, for $P_k$ to attain an Hamiltonian path, either an external edge $v_{k-1}u_i$  or  $u_iv_{k-1}$  is introduced or an external edge.  
	\end{proof}
	\begin{cor}
		For each deserted pendant on $l_k$ on a caterpillar graph, $G_n$ and external leave is required for $G_n$ to contain a spanning path.   
	\end{cor}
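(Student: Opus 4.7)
The plan is to derive the corollary directly from the preceding lemma by applying it once per deserted pendant of $G_n$. Enumerate the deserted pendants as $v_{t_1}u_{t_1},\ldots,v_{t_d}u_{t_d}$. For each index $j$, choose a subpath $P_{k_j}$ of the central path $P_n$ that contains $v_{t_j-1}, v_{t_j}, v_{t_j+1}$; by definition of a deserted pendant, $l(v_{t_j-1}) = l(v_{t_j+1}) = 0$ and $l(v_{t_j}) = 1$, so the hypotheses of the preceding lemma are in force on $P_{k_j}$ and yield $\delta_H(P_{k_j}) \geq 1$.

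The underlying mechanism is a clean degree count that I would record explicitly in the body of the proof. In $G_n$ the leaf $u_{t_j}$ has degree exactly one, and its only neighbour $v_{t_j}$ cannot send a spanning path onward to any other leaf, because both $v_{t_j-1}$ and $v_{t_j+1}$ are leafless. Consequently, in any spanning path of the augmented graph $G_n \cup E^H(G_n)$, the vertex $u_{t_j}$ either serves as one of the two endpoints of the path or else picks up a second incident edge from $E^H(G_n)$. In either case, extending $G_n$ by an external edge in the neighbourhood of the pendant $v_{t_j}u_{t_j}$ is unavoidable, which is exactly the claim of the corollary.

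The main obstacle I anticipate is cosmetic rather than structural: one must phrase the conclusion carefully so that the local guarantee supplied by the lemma is not overread as a tight global count $|E^H(G_n)| \geq d$. A hypothetical single edge $u_{t_j}u_{t_{j'}}$ joining two deserted leaves could in principle look as though it serves two deserted pendants simultaneously, so a quantitative refinement would require the additional observation that the leafless intermediate vertices on $P_n$ must still be traversed through their two existing path edges, which rules out such sharing in a consistent spanning path. That additional step is \emph{not} needed for the corollary as stated --- the per-pendant guarantee from the preceding lemma already suffices --- but it is the natural next ingredient if one wishes to sharpen the statement into an inequality counting distinct external edges.
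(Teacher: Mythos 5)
Your proposal is correct and follows essentially the same route as the paper, whose entire proof is the single sentence that the claim is obvious from the preceding lemma; you simply make explicit the per-pendant application of that lemma and the degree argument behind it. Your added caveat about a single external edge potentially serving two deserted pendants is a genuine point the paper silently skips, and your resolution of it (the leafless neighbours must be traversed via their existing path edges) is the right one, but it does not change the method.
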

	\begin{proof}
		The claim is obvious from the last Lemma.
	\end{proof}
	\begin{remark} \normalfont
		Now, we consider a special $(0,1)-$ pendant segment $\bar{P}$ in such a way that all the leaf $v_ku_i$ on $\bar{P}$ is a deserted pendant. We call $\bar{P}$ a $(0,1-)$deserted pendant segment.
	\end{remark}
	\begin{theorem}
		Suppose that $G_n$ is a caterpillar graph on a central path $P_n$, and suppose that for all $(0,1)-$pendant segment $\bar{P_i}$ on $G_n$, $\bar{P_n}$ is a $(0,1)-$deserted pendant segment. Then, $\lambda_H(G_n)= \sum_{_i=1; l(v_i) \geq 2}^{\gamma}(l(v_i)-1)+ \gamma + \tau$, where $\tau $ is the number of isolated pendants on $G_n$ and $\gamma$ is the number of $(0,1)-$isolated pendants segments on $P_n$.   
	\end{theorem}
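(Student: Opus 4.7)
My approach mirrors the strategy of the two preceding theorems: I view $G_n$ as a concatenation of claws $CL_{v_i}$ at vertices with $l(v_i) \geq 2$, joined by the $(0,1)$-deserted pendant segments, with individual deserted pendants sitting inside those segments. I then count the external edges contributed by each of the three types of components (claws, segments, deserted pendants).

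For the lower bound, I would apply Theorem \ref{thm2} to each claw $CL_{v_i}$ with $l(v_i) = t$, which on its own requires $t-1$ edges to become Hamiltonian. Since the leaves of $CL_{v_i}$ have degree one in $G_n$, every spanning cycle of $G_n$ must supply at least $l(v_i)-1$ external edges incident to the leaves of that claw, summing to $\sum_{l(v_i)\geq 2}(l(v_i)-1)$. By the Corollary above on deserted pendants, each of the $\tau$ deserted pendants forces one further external edge incident to its leaf. Finally, each of the $\gamma$ $(0,1)$-deserted pendant segments must be stitched into the global cycle by a bridging edge that is distinct from all edges counted so far, following the same reasoning as in the preceding theorem for $0$-leaf segments. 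Adding the three contributions yields the claimed lower bound $\lambda_H(G_n) \geq \sum (l(v_i)-1) + \gamma + \tau$.

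For the upper bound, I would construct an explicit Hamiltonian complete edge set $E^H(G_n)$. Enumerate the stars in order along $P_n$ as $S^{(1)}, \ldots, S^{(s)}$ with centers $v_{i_j}$ and leaves $u_{j,1},\ldots,u_{j,l(v_{i_j})}$. Within each star, lay down the internal Hamiltonian path $u_{j,1} \to v_{i_j} \to u_{j,2} \to \cdots \to u_{j,l(v_{i_j})}$, using $l(v_{i_j})-2$ fresh edges. For each deserted pendant $v_t u_t$ inside a $(0,1)$-deserted segment, add one edge from $u_t$ to the next unused vertex along $P_n$, so that the developing spanning path threads through the pendant while continuing forward; this consumes $\tau$ edges. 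For each segment, add one bridging edge from the terminal leaf of the preceding star to the first intermediate vertex of the segment, and close the cycle with a final edge $u_{s,l(v_{i_s})} \to u_{1,1}$; altogether these bridging and closing edges number $\gamma$. The resulting graph contains a spanning cycle on $V(G_n)$, giving $\lambda_H(G_n) \leq \sum (l(v_i)-1) + \gamma + \tau$.

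The principal obstacle is careful bookkeeping at the transitions between components: one must ensure that each deserted pendant is charged exactly one external edge and each segment exactly one bridging edge, with no double counting. A secondary subtlety arises at the boundary of $P_n$, where $P_n$ may begin or end inside a $(0,1)$-segment rather than at a star, and in the degenerate case where a segment contains only a single deserted pendant sandwiched between two stars; here a single external edge might naturally play the role of both bridge and pendant connector, and a short case analysis is needed to verify the count still matches $\sum (l(v_i)-1) + \gamma + \tau$.
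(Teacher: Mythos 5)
Your decomposition into claws, deserted pendants, and the $(0,1)$-deserted segments joining them is exactly the paper's strategy, but your upper-bound bookkeeping does not add up, and this is a genuine gap rather than a cosmetic one. The edges you actually enumerate are $l(v_{i_j})-2$ internal edges per star, $\tau$ pendant edges, and $\gamma$ bridging-plus-closing edges, totalling $\sum_{l(v_i)\geq 2}(l(v_i)-2)+\gamma+\tau$. With $s$ stars this is $s$ fewer than the claimed $\sum(l(v_i)-1)+\gamma+\tau$ --- and it is strictly smaller than your own lower bound, so the construction and the count cannot both be right. The missing edges are the entries into the stars: your internal path $u_{j,1}\to v_{i_j}\to u_{j,2}\to\cdots\to u_{j,l(v_{i_j})}$ costs only $l-2$ new edges precisely because it begins at a leaf, but the walk arrives at that star from the preceding segment along $P_n$, i.e.\ at the centre $v_{i_j}$; to begin at $u_{j,1}$ you must pay one further external edge from the segment's last vertex to $u_{j,1}$ (equivalently, start at the centre and pay $l-1$ internal edges). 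Likewise the closing edge is a $(\gamma+1)$-st transition edge, not one of the $\gamma$. With $s$ stars and $\gamma=s-1$ segments there are $2\gamma+1$ junctions to bridge (two per segment plus the closure), of which your enumeration supplies only $\gamma$. The paper avoids this by charging each claw its full Hamiltonian complete number $\lambda_H(CL_{v_i})=l(v_i)-1$ (internal spanning path \emph{plus} one attachment edge) rather than its path number $\delta_H(CL_{v_i})=l(v_i)-2$; the stars then contribute $s=\gamma+1$ transition edges and the segments the remaining $\gamma$, which is exactly $2\gamma+1$.

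A secondary weakness, which you share with the paper, is the lower bound: you assert that the $\sum(l(v_i)-1)$ edges forced by the claws, the $\tau$ edges forced by the deserted pendants, and the $\gamma$ bridging edges are pairwise distinct, but a single external edge may join a claw leaf to a deserted pendant's leaf, or to a segment vertex, and would then be counted twice. A degree argument (each claw centre has degree $2$ in any spanning cycle, so at least $l(v_i)-2$ of its leaves must have both of their cycle edges external, and each external edge has at most two endpoints among leaves) salvages the claw term, but the additivity of the extra $\gamma+\tau$ still needs justification; the paper's ``it is easy to trace the claim from this point'' leaves the same hole, so on this point you are no worse off than the source.
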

	\begin{proof}
		Let $\bar{P_i}$ be some $(0,1)-$pendant segment such that every pendant attached to $\bar{P_n}$ is a isolated. Then, there exists vertex $v_k$ on $\bar{P}$ such that $v_k$ is adjacent to some $v_{k-1} \in V(P_n)$, where $l(v_k)=0$ and $l(v_{k-1}) \geq 2$. Suppose that $v_{k-1}$ and the leaves adjacent to it induce a $S_p$ subgraph of $G_n$ with $p$ pendants. Now, from earlier result, $\lambda_H(S_p) = l(v_{k-1})-1$. Furthermore, let $v_{k+t} \in \bar{P_i}$ be such that $l(v_{k+t})=1$ and $v_{k+t}$ is the first vertex from $v_{k-1}$ on $\bar{P_i}$ adjacent to a leaf to form an isolated pendant $v_{k+t}u_k=l_t$, and $d(v_k,v_{k+t})=t$. With $l_t$ being an isolated pendant, there exists some vertex $v_{k+t+1}$ adjacent to $v_{k+t}$, such that $l(v_{k+t+1})=0$ and thus, there exists some edge $v_{k+t+1}u_k \in E^H(G_n)$ necessary  for $G_n$ to be Hamiltonian. Now, there exists $v_j \in \bar{P_i}$ such that $j > k+t+1$, such that $d(v_J, v_{K+t+1}) \geq 1$ and $l(v_j) \geq 1$. Suppose that $l(v_j)=1$, then there exists a pendant $v_ju_j$, which is an isolated pendant. Then there exits $ u_jv_{j+1} \in E^H(G_n)$ connecting $v_ju_j$ to vertex $v_{j+1}$. Suppose this process continues through $r-$isolated pendants on $P_i$. There still exits a vertex $v_s \in \bar{P_i}$ with $l(v_s) = 0$, such that $l(v_{s+1}) \geq 2$ and $v_{s+1} \in P_n$. Thus, there exits some edge $v_su_0 \in E^H(G_n)$, such that $v_{s+1}u_0$, such that $v_{s+1}u_0$ is a pendant on the subgraph $S_q$ of $G_n$, induced by $v_{s+1}$ and the leafs incident to $v_{s+1}$. Thus if $\bar{P_i}$ contains $t$ isolated vertices, then $\bar{P_i}$ necessarily requires $t+1$ edges from $E^H(G_n)$ for $G_n$ to be Hamiltonian. Now suppose that $P_n$ contains $\gamma$ number of $(0,1)-$isolated pendant segments, with $\delta_1, \delta_2, \cdots, \delta_{\gamma}$ isolated pendants and $\delta_k \geq 0, k \in [1,\gamma]$. Therefore, the combination of all $(0,1)-$isolated pendant segments in $G_n$ necessarily requires $\gamma + \sum_{i=1}^\gamma \delta_1$ edges from $E^H(G_n)$ to make $G_n$ Hamiltonian. Next, let $j$ be some positive integer such that $j \in [1, \gamma +1]$. Now, there exist the vertices $v_i, v_2, \cdots, v_j$ such that for all $i \in [1,j]$, $l(v_i) \geq 2$. Then from earlier, $\lambda_H(G_n) \leq \sum_{i = 1}^{j}(l(v_i)-1)+\sum_{i = 1}^{\gamma} \delta_i + \gamma$. 
		Conversely, suppose that $v_1, v_1 \in V(P_n)$ such that $l(v_1) \geq 2$ and $l(v_2)=0$. From an earlier result, $\delta_H(s_k)=l(v_1)-2$, where $S_k$ is a subgraph of $G_n$, induced by $v_1$ and the leaves adjacent to $v_1$. Now, suppose that $v_2 \bar{P_k}$ a $(0,1)-$pendant segment adjacent to $S_k$. Then, from earlier remark and WLOG, there exists $u_k \in V(S_k)$ such that $u_kv_2 \in E^H(G_n)$. Then, for $S_p \cup \bar{P_k} \subseteq G_n$, and thus, $\lambda_H(S \cup \bar{P}) \geq l(v_i)-1$. It is easy to trace the claim from this point. 
	\end{proof}
	
	\section{Conclusion}
	
	In this paper, we have investigated the Hamiltonian complete number \(\lambda_H(G)\) for various classes of caterpillar graphs. We derived explicit formulas for \(\lambda_H(G)\) in the case of regular caterpillar graphs, where each vertex on the central path is adjacent to a fixed number of leaves. Specifically, we showed that for a regular caterpillar graph \(G_{n(k)}\), \(\lambda_H(G_{n(k)}) = n(k-1)\). We also explored irregular caterpillar graphs, where the number of leaves varies, and provided bounds for \(\lambda_H(G)\) in these cases. Our results extend the understanding of Hamiltonian properties in tree-like structures and have potential applications in network design, route planning, and optimization. Future work could explore the Hamiltonian complete number for other classes of graphs or investigate the computational complexity of determining \(\lambda_H(G)\) for more complex graph structures.

\end{document}